\theoremstyle{plain}
\newtheorem{theorem}{Theorem}%[section]
\newtheorem{lemma}[theorem]{Lemma}
\newtheorem{proposition}[theorem]{Proposition}
\newtheorem{corollary}[theorem]{Corollary}
\theoremstyle{remark}
\newtheorem{example}[theorem]{Example}
\newtheorem*{acknowledgment}{Acknowledgment}
\newcommand{\ldiv}{\backslash}
\newcommand{\rdiv}{/}
\newcommand{\Sym}{\mathrm{Sym}}
\newcommand{\Aut}{\mathrm{Aut}}
\newcommand{\Semi}{\mathrm{SemiAut}}
\newcommand{\Mlt}{\mathrm{Mlt}}
\newcommand{\Inn}{\mathrm{Inn}}
\newcommand{\A}{f^{-m}(x^{-1})}
\title{Moufang semidirect products of loops with groups and inverse property extensions}
\author[M. Greer]{Mark Greer}
\email{\url{mgreer@una.edu}}
\author[L. Raney]{Lee Raney}
\email{\url{lraney@una.edu}}
\address{Department of Mathematics\\
One Harrison Plaza \\
University of North Alabama\\
Florence AL 35632 USA }
\subjclass[2010]{20N05}
\keywords{extensions, semidirect products, Moufang loops, inverse property loops}
\begin{document}
\allowdisplaybreaks
\begin{abstract}
We investigate loops which can be written as the semidirect product of a loop and a group, and we provide a necessary and sufficient condition for such a loop to be Moufang.  We also examine a class of loop extensions which arise as a result of a finite cyclic group acting as a group of semiautomorphisms on an inverse property loop.  In particular, we consider closure properties of certain extensions similar to those as in \cite{gagola13}, but from an external point of view.
\end{abstract}

\maketitle

\section{Introduction}
\label{intro}

A \emph{loop} $(Q,\cdot)$ consists of a set $Q$ with a binary operation $\cdot : Q\times Q\to Q$ such that (i) for all $a,b\in Q$, the equations $ax = b$ and $ya = b$ have unique solutions $x,y\in Q$, and (ii) there exists $1\in Q$ such that $1x = x1 = x$ for all $x\in Q$. We denote these unique solutions by $x = a\ldiv b$ and $y = b\rdiv a$, respectively.  Standard references in loop theory are \cite{bruck71, pflugfelder90}. All loops considered here are finite loops.

If $G$, $N$, and $H$ are groups, then $G$ is an \emph{extension} of $H$ by $N$ if $N \trianglelefteq G$ and $G/N \simeq H$.  Extensions of groups are of great interest in group theory. Of particular interest are the ideas of \emph{internal} and \emph{external} semidirect products of groups.  That is, for $G=NH$ with $N\trianglelefteq G$ and $N\cap H=1$, the structure of $G$ is uniquely determined by $N$, $H$, and the action of $H$ on $N$ by conjugation. In this case, $G$ is the internal \emph{semidirect product of $H$ acting on $N$}.  Alternatively, given groups $N$ and $H$ and a group homomorphism $\phi:H \to \Aut(N)$ from $H$ into the group of automorphisms of $N$, one can construct the external semidirect product $G=N\rtimes_{\phi}H$ in a standard way: namely, $G = \{(n,h)\ | \ n \in N, h \in H\}$ with multiplication defined by $(n_1,h_1)(n_2,h_2) = (n_1 \phi(h_1)(n_2), h_1h_2)$. Note that for all $h \in H$, $\phi(h)$ corresponds to conjugation by $h$ in $N$. There is a natural equivalence between 
internal and external semidirect products of groups, and both are usually referred to as \emph{semidirect products}.

Now, let $N$ be a loop and $H$ a group.  Let $\phi:H\to \Sym(N)$ be a group homomorphism from $H$ into the symmetric group on $N$.  Then, the external semidirect product $G=N\rtimes_{\phi}H$ is the quasigroup defined as $G = \{(x,g) \ | \ x \in N, g \in H\}$ with multiplication given by 
\begin{equation}
(x,g)(y,h)=(x\phi(g)(y),gh).
\label{semimult}
\end{equation}
Such semidirect products have been studied in \cite{KJ00}.  Note that if $\phi:H\to \Sym(N)_{1}$, where $\Sym(N)_1$ is the stabilizer of $1$ in $\Sym(N)$, then $G$ is a loop.  In general, properties of the loop $N$ do not necessarily extend to all of $G$. For example, if $N$ is a Moufang loop, $G$ need not be Moufang, even in the case that $H$ acts on $N$ as a group of automorphisms. 

In $\S 2$, we provide a necessary and sufficient condition on $N$ and $\phi$ in order for $G = N \rtimes_\phi H$ to be a Moufang loop (Theorem \ref{t1}). We also provide an example which shows that it is possible to satisfy the hypotheses of Theorem \ref{t1}.

In \cite{gagola13}, the author studies Moufang loops that can be written as the product of a normal Moufang subloop and a cyclic subgroup.  There it is shown that given a Moufang loop $G=NH$, where $N$ is a normal subloop and $H$ is a cyclic subgroup of order coprime to 3, the multiplication in $G$ is completely determined by a particular semiautomorphism of $N$. Conversely, the author notes that an arbitrary extension of a cyclic group of order coprime to 3 by a Moufang loop (with multiplication given as in the conclusion of Theorem \ref{gag}) is not necessarily Moufang.  In $\S 3$, our approach is to consider the \emph{external} extension of a cyclic group $H$ (whose order is not divisible by $3$) by a loop $N$ with multiplication defined as in \cite{gagola13}.  In particular, we show that the class of IP loops is closed under such extensions (Theorem \ref{t2}).  We conclude with examples which illustrate that, in general, that certain classes of loops (\emph{i.e.} diassociative loops, power associative loops, flexible loops, etc.) are not closed under such extensions.
% % % % % % % % % % % % % % % % % % % %
% % % % % % % % % % % % % % % % % % % %
% % % % % % % % % % % % % % % % % % % %
\section{Semidirect products and Moufang loops}
\label{back}
Throughout juxtaposition binds more tightly than an explicit $\cdot$ so that, for instance, $xy\cdot z$ means $(xy)z$. For $x\in Q$, where $Q$ is a loop, define the \emph{right} and \emph{left translations} by $x$ by, respectively, $R_x(y) = yx$ and $L_x(y) = xy$ for all $y\in Q$. The fact that these mappings are permutations of $Q$ follows easily from the definition of a loop.  It is easy to see that $L_{x}^{-1}(y)=x\backslash y$ and $R_{x}^{-1}(y)=y/x$.  We define the \emph{multiplication group of Q}, $\Mlt(Q)=\langle L_{x},R_{x} \mid \forall x\in Q\rangle $ and the \emph{inner mapping group of Q}, $\Mlt(Q)_{1}=\Inn(Q)=\{\theta\in \Mlt(Q)\mid \theta(1)=1\}$, the stabilizer of $1\in Q$.  

A bijection $f :Q\rightarrow Q$ is a \emph{semiautomorphism} of $Q$ if (i) $f(1)=1$ and (ii) $f(x(yx))=f(x)(f(y)f(x))$.  The \emph{semiautomorphism group of Q}, $\Semi(Q)$ is defined as the set of semiautomorphisms of $Q$ under composition. The following proposition will be helpful throughout.

\begin{proposition}
Let $Q$ be a loop with two-sided inverses (\emph{i.e.} $1\backslash x=1/x=x^{-1}$ for all $x\in Q$) and $f\in\Semi(Q)$. Then for all $x\in Q$, $f(x^{-1})=(f(x))^{-1}$
\end{proposition}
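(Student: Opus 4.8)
The plan is to produce the relation $f(x^{-1})f(x)=1$ by substituting a carefully chosen pair into the defining identity $f(x(yx))=f(x)(f(y)f(x))$, and then to upgrade the resulting one‑sided statement to a genuine two‑sided inverse using the hypothesis that left and right inverses coincide in $Q$.

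First I would set $y=x^{-1}$ in the semiautomorphism identity. Since $Q$ has two‑sided inverses, $x^{-1}x=1$, so the left‑hand side collapses: $x(x^{-1}x)=x\cdot 1=x$, and hence $f\bigl(x(x^{-1}x)\bigr)=f(x)$. The right‑hand side is $f(x)\bigl(f(x^{-1})f(x)\bigr)$, so we obtain $f(x)=f(x)\bigl(f(x^{-1})f(x)\bigr)$. Because left translation by $f(x)$ is a permutation of $Q$ and $f(x)\cdot 1=f(x)$, uniqueness of solutions forces $f(x^{-1})f(x)=1$. At this point one can already finish: $f(x^{-1})f(x)=1$ says that $f(x^{-1})=1/f(x)$ is the left inverse of $f(x)$, and since $Q$ has two‑sided inverses this common value is $f(x)^{-1}$.

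For symmetry it is cleaner to also run the same argument with $x$ replaced by $x^{-1}$ and $y$ replaced by $x$: now $x^{-1}(xx^{-1})=x^{-1}\cdot 1=x^{-1}$, so $f(x^{-1})=f(x^{-1})\bigl(f(x)f(x^{-1})\bigr)$, and the same cancellation yields $f(x)f(x^{-1})=1$. Together the two relations say that $f(x^{-1})$ is a two‑sided inverse of $f(x)$, and uniqueness of inverses in a loop gives $f(x^{-1})=f(x)^{-1}$. I do not expect a genuine obstacle here; the only point requiring care is the inverse bookkeeping — invoking only the legitimate cancellation (uniqueness of the solution of $f(x)z=f(x)$, rather than a not‑yet‑identified inverse of $f(x)$), and being explicit that the step from $f(x^{-1})f(x)=1$ to $f(x^{-1})=f(x)^{-1}$ uses the standing assumption $x\backslash 1 = 1/x$ in $Q$.
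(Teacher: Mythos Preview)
Your argument is correct and is essentially the paper's own proof: substitute $y=x^{-1}$ into the semiautomorphism identity to get $f(x)=f(x)\bigl(f(x^{-1})f(x)\bigr)$, cancel to obtain $f(x^{-1})f(x)=1$, and invoke the two-sided inverse hypothesis. Your added symmetric pass (replacing $x$ by $x^{-1}$) is not needed but does no harm.
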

\begin{proof}
Since $f$ is a semiautomorphism, $f(x)=f(x\cdot (x^{-1}x))=f(x)(f(x^{-1})f(x))$.  Canceling gives $f(x)^{-1}=f(x^{-1})$.
\end{proof}

\emph{Moufang loops}, which are easily the most studied class of loops, are defined by any one of the following four equivalent identities:
\[(xy)(zx) = x(yz\cdot x) \qquad (xy)(zx) = (x\cdot yz)x\]
\[ (xy\cdot x)z=x(y\cdot xz)  \qquad (zx\cdot y)x=z(x\cdot yx).\]
In particular, Moufang loops are \emph{diassociative} (\emph{i.e.} every subloop generated by two elements is a group).  If $Q$ is Moufang, then $\Inn(Q)\leq \Semi(Q)$ \cite{bruck52}.

The following shows that if $G = N \rtimes_\phi H$ is Moufang, then $H$ acts on $N$ as a group of automorphisms.
\begin{lemma}
Let $N$ be a loop and $H$ a group.  Let $\phi:H\to \Sym(N)_{1}$ be a group homomorphism.  If $G=N\rtimes_{\phi} H$ is Moufang, then $\phi(H)\subseteq Aut(N)$.
\label{l1}
\end{lemma}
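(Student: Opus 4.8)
The plan is to substitute a well-chosen triple of elements into the Moufang identity $(xy)(zx) = x(yz\cdot x)$ and then compare first coordinates. Throughout, fix $g\in H$ and write $\psi = \phi(g)$. Since $\phi$ is a homomorphism into $\Sym(N)_1$, we have $\phi(1) = \mathrm{id}_N$, $\phi(g^{-1}) = \psi^{-1}$, and $\phi(g^{-k})(1) = 1$ for every $k$. The assertion $\phi(H)\subseteq\Aut(N)$ amounts to showing that each such $\psi$ is multiplicative, i.e. $\psi(vw) = \psi(v)\psi(w)$ for all $v,w\in N$; since $\psi$ is already a bijection of $N$, multiplicativity makes it an automorphism, and $g$ is arbitrary, so this suffices.

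Given $v,w\in N$, I would apply the Moufang identity with
\[
x = (1,g),\qquad y = (v,g^{-1}),\qquad z = (\psi(w),g^{-1}).
\]
Using \eqref{semimult} together with the facts above, the left side simplifies term by term: $(1,g)(v,g^{-1}) = (\psi(v),1)$ and $(\psi(w),g^{-1})(1,g) = (\psi(w),1)$, so $(xy)(zx) = (\psi(v),1)(\psi(w),1) = (\psi(v)\psi(w),1)$. For the right side, $(v,g^{-1})(\psi(w),g^{-1}) = \bigl(v\cdot\phi(g^{-1})(\psi(w)),\,g^{-2}\bigr) = (vw,g^{-2})$ because $\phi(g^{-1})\circ\phi(g) = \mathrm{id}_N$; then $(vw,g^{-2})(1,g) = (vw,g^{-1})$ and $(1,g)(vw,g^{-1}) = (\psi(vw),1)$, so $x(yz\cdot x) = (\psi(vw),1)$. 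Equating the two (equal) sides yields $\psi(v)\psi(w) = \psi(vw)$, which is exactly what is needed.

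The only delicate point is choosing the substitution so that the $H$-components telescope and every appearance of $\phi$ is evaluated either at $1$ or in the cancelling pair $\phi(g^{-1})\circ\phi(g)$; after that is set up, each line is a mechanical application of \eqref{semimult}. I do not anticipate a genuine obstacle here — note in particular that the argument never uses any structural property of the loop $N$ beyond the hypothesis that $G$ itself is Moufang, and that an analogous computation would work starting from any of the other three Moufang identities.
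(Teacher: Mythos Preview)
Your proof is correct and follows essentially the same strategy as the paper: plug a specific triple into the Moufang identity $(xy)(zx)=x(yz\cdot x)$ and compare first coordinates. The paper's substitution is a bit simpler---it takes $x=(a,1_H)$, $y=(1,h)$, $z=(c,1_H)$, obtaining $a\cdot\phi(h)(ca)=a\cdot\phi(h)(c)\phi(h)(a)$ and cancelling $a$ on the left---but your more elaborate choice works equally well.
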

\begin{proof}
Since $G$ is Moufang, $(xy)(zx) = x((yz)x)$ for all $x,y,z\in G$.  Therefore for any $h\in H$,
\begin{alignat*}{2}
[(x,1_{H})(1,h)][(z,1_{H})(x,1_{H})]&=(x,1_{H})[(1,h)(z,1_{H})\cdot (x,1_{H})] && \qquad\Leftrightarrow \\
(x\cdot \phi(h)(zx),h)&=(x\cdot \phi(h)(z)\phi(h)(x),h) 
\end{alignat*}
Cancellation gives $\phi(h)(zx)=\phi(h)(z)\phi(h)(x)$.  
\end{proof} 

A loop $Q$ is an \emph{inverse property} loop (or IP loop) if $x$ has a two-sided inverse for all $x\in Q$, and $(yx^{-1})x=y=x(x^{-1}y)$ holds for all $x,y\in Q$.  
\begin{lemma}
Let $N$ be an IP loop.  Then there exists $f\in Aut(N)$ such that 
\begin{equation}\tag{$*$}
(xy)(zf(x))=x(yz\cdot f(x))
\label{eq1}
\end{equation}
for all $x, y, z \in N$ if and only if $N$ is Moufang.
\label{l2}
\end{lemma}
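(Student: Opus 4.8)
The reverse implication is immediate: if $N$ is Moufang, take $f=\mathrm{id}_N\in\Aut(N)$, and then \eqref{eq1} is exactly the Moufang identity $(xy)(zx)=x(yz\cdot x)$. So the content is the forward direction, and I will assume $f\in\Aut(N)$ satisfies \eqref{eq1}.

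The plan is to deduce one of the four Moufang identities from \eqref{eq1} by combining several of its instances, exploiting two extra features. First, $N$ has the inverse property: one has the cancellation laws, $L_x\inv=L_{x\inv}$ and $R_x\inv=R_{x\inv}$, and $a\mapsto a\inv$ is an anti-automorphism satisfying $f(a\inv)=f(a)\inv$ (by the Proposition). Second, $f$ is an \emph{automorphism}, so \eqref{eq1} remains valid after replacing any variable by its image under $f$ or $f\inv$; in particular the entire analysis can be run with $f\inv$ in place of $f$. I would begin with two reductions. Setting $z=1$ in \eqref{eq1} gives $(xy)f(x)=x(yf(x))$, so $L_x$ and $R_{f(x)}$ commute; hence \eqref{eq1} is equivalent to $(xy)(zf(x))=(x\cdot yz)f(x)$, i.e.\ the triple $\bigl(L_x,R_{f(x)},L_xR_{f(x)}\bigr)$ is an autotopism of $N$ (a triple $(\alpha,\beta,\gamma)$ of permutations with $\alpha(a)\beta(b)=\gamma(ab)$ for all $a,b$) for every $x\in N$. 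Applying $a\mapsto a\inv$ to \eqref{eq1} produces the companion identity $(f(x)y)(zx)=(f(x)\cdot yz)x$.

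Next I would combine \eqref{eq1}, this companion, and their $f\inv$-versions. Composing the relevant autotopisms yields an autotopism with trivial first (or second) component; since an autotopism of the form $(\mathrm{id},\beta,\beta)$ forces $\beta=R_c$ with $c$ in the right nucleus of $N$ (and dually $(\alpha,\mathrm{id},\alpha)$ forces $\alpha=L_c$ with $c$ in the left nucleus), tracking $c$ shows that the ``$f$-defect'' --- the element $f^2(x)x\inv$, and with more care $f(x)x\inv$ --- lies in the nucleus of $N$; here one uses that the left and right nuclei of an IP loop coincide (again via the anti-automorphism). Finally, since nuclear elements associate freely with everything, one returns to \eqref{eq1}, replaces $x$ by $f(x)$ (or $f\inv(x)$), and cancels the nuclear factor to obtain $(xy)(zx)=x(yz\cdot x)$; thus $N$ is Moufang.

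I expect the combining step to be the main obstacle. One cannot hope to prove $f=\mathrm{id}_N$: every automorphism of an associative loop already satisfies \eqref{eq1}, so $f$ must cancel algebraically rather than disappear. A secondary delicacy is that the most transparent argument controls $f^2$ rather than $f$ (as $\langle f^2\rangle$ need not contain $f$), so one needs either a sharper combination of the auxiliary identities that reaches the defect $f(x)x\inv$ directly, or a supplementary argument using that the nucleus of an IP loop is a subgroup on which $f$ restricts to an automorphism of order at most~$2$.
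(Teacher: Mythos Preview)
Your reverse direction and the preliminary reductions are correct: setting $z=1$ in \eqref{eq1} gives $xy\cdot f(x)=x\cdot yf(x)$, and applying the IP anti-automorphism $a\mapsto a^{-1}$ to \eqref{eq1} yields the companion $(f(x)y)(zx)=(f(x)\cdot yz)x$, so both $(L_x,R_{f(x)},L_xR_{f(x)})$ and $(L_{f(x)},R_x,L_{f(x)}R_x)$ are autotopisms of $N$. Your remark that one cannot hope to force $f=\mathrm{id}_N$ is also apt.

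But what you have submitted is an outline, not a proof, and the gap you yourself flag is real and unresolved. Composing the autotopisms you list (together with their $f^{\pm1}$-reparametrizations) so that one component becomes the identity only ever produces nuclear elements of the form $f^2(x)\cdot x^{-1}$ or, equivalently, $f(x)\cdot(f^{-1}(x))^{-1}$; no evident combination reaches $f(x)\cdot x^{-1}$, and you give no argument that the $f^2$-information suffices. Neither of your proposed fixes is carried out, and the second rests on a false premise: $f$ need \emph{not} have order at most~$2$ on the nucleus --- when $N$ is a group the nucleus is all of $N$ and every automorphism satisfies \eqref{eq1}. As written, the forward implication is not established.

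By contrast, the paper avoids autotopisms and the nucleus entirely. Starting from $xy\cdot f(x)=x\cdot yf(x)$ and its $f^{-1}$-variant, it builds by substitution and IP cancellation a chain of auxiliary identities --- first $(f^{-1}(x)\cdot zx)f(y)=f^{-1}(x)\cdot(z\cdot xf(y))$, then an identity in $yx$ and $f(yx)$ --- and finishes by deducing the left Moufang law $(xy\cdot x)z=x(y\cdot xz)$ directly. The argument is purely computational; no structural detour through nuclear defects is used.
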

\begin{proof}
Firstly, if $N$ is Moufang, then the trivial isomorphism satisfies \eqref{eq1} for all $x,y,z\in N$.  Conversely, let $f \in \Aut(N)$ such that \eqref{eq1} holds for all $x,y,z\in N$.  Note that setting $z=1$ in \eqref{eq1} gives 
\begin{equation}
x\cdot yf(x)=xy\cdot f(x).  
\label{commute1}
\end{equation}
Since $f$ is an automorphism and the above holds for all $x \in N$, replacing $x$ with $f^{-1}(x)$ yields 
\begin{equation}
f^{-1}(x)\cdot yx=f^{-1}(x)y\cdot x.
\label{commute2}
\end{equation}
Now, compute
\begin{alignat*}{3}
(x\cdot zf(x))f(y)
&=[x\cdot (y\cdot y^{-1}z)f(x)]f(y)
&&\stackrel{\eqref{eq1}}{=}[(xy)(y^{-1}z\cdot f(x))]f(y)\\
&=[(xy)(y^{-1}z\cdot f(x))](f(x)^{-1}\cdot f(xy))
&&\stackrel{\eqref{eq1}}{=}(xy)[(y^{-1}z\cdot f(x))f(x)^{-1}\cdot f(xy)]\\
&=(xy)\cdot (y^{-1}z)f(xy)
&&\stackrel{\eqref{eq1}}{=}(xy\cdot y^{-1})\cdot zf(xy)\\
&=x\cdot zf(xy).
\end{alignat*}
As before, replacing $x$ with $f^{-1}(x)$ yields 
\begin{equation}
(f^{-1}(x)\cdot zx)f(y)=f^{-1}(x)\cdot (z\cdot xf(y)).
\label{eq2}
\end{equation}
Hence, we have
\begin{alignat*}{3}
(x\cdot f^{-1}(y)z)(yf(x))
&\stackrel{\eqref{eq1}}{=}x[(f^{-1}(y)z\cdot y)f(x)]
&&\stackrel{\eqref{commute2}}{=}x[(f^{-1}(y)\cdot zy)f(x)]\\
&\stackrel{\eqref{eq2}}{=}x[f^{-1}(y)\cdot (z\cdot yf(x))].
\end{alignat*}
Multiplying by $f^{-1}(y)$ on the left gives
\begin{equation}
f^{-1}(y)[x\cdot f^{-1}(y)(z\cdot yf(x))]=f^{-1}(y)[(x\cdot f^{-1}(y)z)( yf(x))].
\label{eq3}
\end{equation}
Now, compute
\begin{align*}
f^{-1}(y)x\cdot z(yf(x))
&=f^{-1}(y)x\cdot zf(f^{-1}(y)x)\\
&=f^{-1}(y)x\cdot (x^{-1}\cdot xz)f(f^{-1}(y)x)\\
&\stackrel{\eqref{eq1}}{=}(f^{-1}(y)x\cdot x^{-1})(xz\cdot f(f^{-1}(y)x))\\
&=f^{-1}(y)[xz\cdot yf(x)].
\end{align*}
Replacing $z$ with $f^{-1}(y)z$ gives
\begin{equation}
(f^{-1}(y)x)\cdot (f^{-1}(y)z\cdot (yf(x))=f^{-1}(y)[(x\cdot f^{-1}(y)z)\cdot (yf(x))].
\label{eq4}
\end{equation}
Combining \eqref{eq3} and \eqref{eq4}, we have
\begin{align*}
f^{-1}(y)[x\cdot f^{-1}(y)(z\cdot yf(x))]
&\stackrel{\eqref{eq3}}{=}f^{-1}(y)[(x\cdot f^{-1}(y)z)\cdot( yf(x))]\\
&\stackrel{\eqref{eq4}}{=}(f^{-1}(y)x)[f^{-1}(y)z\cdot yf(x)].
\end{align*}
Replacing $y$ with $f(y)$ yields
\begin{equation}
y\cdot x(y\cdot zf(yx))=yx\cdot(yz\cdot f(yx)).
\label{eq5}
\end{equation}
Finally, we have
\begin{align*}
(xy\cdot x)z
&=(xy\cdot x)\cdot (zf(xy)^{-1})f(xy)\\
&\stackrel{\eqref{eq1}}{=}xy\cdot [(x\cdot zf(xy)^{-1})f(xy)]\\
&\stackrel{\eqref{eq5}}{=}x\cdot y(x\cdot (zf(xy)^{-1}\cdot f(xy)))\\
&=x(y\cdot xz).
\end{align*}
Therefore, $N$ is Moufang.
\end{proof}
\begin{theorem}
Let $N$ be a loop. Then $N$ is Moufang if and only if there exists some $f\in\Aut(N)$ which satisfies \eqref{eq1} for all $x,y,z\in N$.
\label{l3}
\end{theorem}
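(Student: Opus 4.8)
The plan is to reduce the statement to Lemma~\ref{l2}. That lemma already establishes the asserted equivalence under the additional hypothesis that $N$ is an IP loop, so the only genuinely new content to supply is that this hypothesis is automatic: \emph{if a loop $N$ admits some $f\in\Aut(N)$ satisfying \eqref{eq1}, then $N$ is necessarily an IP loop.} Granting this, the implication ``\eqref{eq1} holds for some $f\in\Aut(N)$ $\Rightarrow$ $N$ Moufang'' follows at once from Lemma~\ref{l2}, while the converse is immediate: if $N$ is Moufang then $\mathrm{id}_N\in\Aut(N)$ and \eqref{eq1} becomes the Moufang identity $(xy)(zx)=x(yz\cdot x)$.

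So suppose $f\in\Aut(N)$ satisfies \eqref{eq1}. First I would record the identity obtained by setting $z=1$, namely $xy\cdot f(x)=x\cdot yf(x)$, which is exactly \eqref{commute1}. To obtain the left half of the inverse property, substitute $y=x^{\rho}$, the right inverse of $x$ (the unique solution of $xx^{\rho}=1$, available in any loop): the left side of \eqref{eq1} collapses to $zf(x)$, while the right side $x\cdot(x^{\rho}z\cdot f(x))$ equals $(x\cdot x^{\rho}z)\cdot f(x)$ by \eqref{commute1}; cancelling the common right factor $f(x)$ yields $x(x^{\rho}z)=z$ for all $x,z$. Taking $z=x$ gives $x^{\rho}x=1$, so $x^{\rho}$ is a two-sided inverse $x^{-1}$ of $x$, and $x(x^{-1}z)=z$ is the left inverse law.

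With two-sided inverses now in hand, substitute $z=f(x)^{-1}$ into \eqref{eq1}: the left side collapses to $xy$ and the right side to $x\cdot(yf(x)^{-1}\cdot f(x))$, so cancelling the left factor $x$ gives $(yf(x)^{-1})f(x)=y$ for all $y$. Since $f$ is onto, $f(x)$ ranges over all of $N$; writing this for $a$ in place of $f(x)$ and then replacing $a$ by $a^{-1}$ gives $(ya)a^{-1}=y$ for all $y,a$, the right inverse law. Hence $N$ is an IP loop, and Lemma~\ref{l2} finishes the proof. I do not anticipate a real obstacle; the only points needing care are the order of operations — two-sided inverses must be produced by the first substitution before $f(x)^{-1}$ is legitimate in the second — and the remark that every cancellation above uses nothing more than bijectivity of the left and right translations of a loop.
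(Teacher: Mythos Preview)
Your proof is correct and follows the same global strategy as the paper---reduce to Lemma~\ref{l2} by showing that any loop admitting such an $f$ is automatically IP---but your derivation of the IP property is substantially shorter and more transparent than the paper's. The paper first establishes two-sided inverses via a sequence of auxiliary identities (expressing $f^{-1}(x)\backslash x$ in terms of $f^{-1}(x)\backslash 1$, then manipulating these through \eqref{commute1} and \eqref{eq1} to reach $x=x\cdot(x\backslash 1)x$), and only then extracts the left and right inverse laws by further substitutions. You bypass this detour entirely: the single substitution $y=x^{\rho}$ in \eqref{eq1}, combined with \eqref{commute1}, immediately yields $x(x^{\rho}z)=z$, which simultaneously gives two-sided inverses (take $z=x$) and the left inverse law; the substitution $z=f(x)^{-1}$ then delivers the right inverse law after noting that $f$ is onto and $(a^{-1})^{-1}=a$. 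Your argument isolates exactly the two substitutions that do the work and uses nothing beyond cancellation in a loop, whereas the paper's route, though valid, obscures these simple choices behind longer computations.
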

\begin{proof} As before, if $N$ is Moufang, then the trivial automorphism satisfies \eqref{eq1} for all $x,y,z\in N$.  Conversely, suppose there exists an $f\in \Aut(N)$ that satisfies \eqref{eq1} for all $x,y,z\in N$.  By Lemma \ref{l2}, it is enough to show that $N$ is an IP loop.  As before, we have
\begin{equation}\tag{\ref{commute1}}
x\cdot yf(x)=xy\cdot f(x),
\end{equation}
\begin{equation}\tag{\ref{commute2}}
f^{-1}(x)\cdot yx=f^{-1}(x)y\cdot x.
\end{equation}
Moreover, $f(x)=(x\cdot x\backslash 1)f(x)=x\cdot (x\backslash 1)f(x)$, implying $x\backslash f(x)=x\backslash 1 \cdot f(x)$.  Applying $f^{-1}$ yields
\[
f^{-1}(x)\backslash x = f^{-1}(x)\backslash 1 \cdot x.
\]
We compute
\begin{alignat*}{3}
\underbracket[.75 pt]{(f^{-1}(x)\backslash x)}(x\backslash 1)
&=[(f^{-1}(x)\backslash 1)\cdot x](x\backslash 1)
&&=[(f^{-1}(x)\backslash 1)\cdot x]\cdot f(f^{-1}(x)\backslash 1)\\
&\stackrel{\eqref{commute1}}{=}(f^{-1}(x)\backslash 1)(x\cdot f(f^{-1}(x)\backslash 1))
&&=(f^{-1}(x)\backslash 1)\cdot x(x\backslash 1)\\
&=f^{-1}(x)\backslash 1.
\end{alignat*}
Finally, we compute
\begin{alignat*}{3}
x
&=f^{-1}(x)(f^{-1}(x)\backslash 1)\cdot x
&&\stackrel{\eqref{commute2}}{=}f^{-1}(x)[\underbracket[.75 pt]{(f^{-1}(x)\backslash 1)}\cdot x]\\
&=f^{-1}(x)[(f^{-1}(x)\backslash x)(x\backslash 1)\cdot x]
&&=f^{-1}(x)[(f^{-1}(x)\backslash x)(x\backslash 1)\cdot f(f^{-1}(x))]\\
&\stackrel{\eqref{eq1}}{=}[f^{-1}(x)(f^{-1}(x)\backslash x)]\cdot [(x\backslash 1)(f(f^{-1}(x)))]
&&=x\cdot (x\backslash 1)x.
\end{alignat*}
This immediately implies $1/x=x\backslash 1=x^{-1}$.  

To show that $N$ is IP, observe that
\[
xy\cdot f(x)=x\cdot yf(x)=x[x^{-1}(x^{-1}\backslash y)\cdot f(x)]\stackrel{\eqref{eq1}}{=}xx^{-1}\cdot (x^{-1}\backslash y)f(x)=(x^{-1}\backslash y)f(x),
\]
which implies $xy=x^{-1}\backslash y$.  Hence, $x^{-1}\cdot xy=y$.  

To show that $yx\cdot x^{-1}=y$, note that $y^{-1}\cdot yx=x$ implies $x/(yx)=y^{-1}$.  Also note that \eqref{eq1} is equivalent to $[x\cdot (yz\cdot f(x))]/(zf(x))=xy$.  Using these two identities combined with $x^{-1}\cdot xy=y$, we have
\[
y^{-1}=f((xy)^{-1})/[yf((xy)^{-1})]=[(xy)^{-1}\cdot (xy\cdot f((xy)^{-1}))]/ [yf((xy)^{-1})]=(xy)^{-1}x.
\]
Hence, $xy\cdot y^{-1}=(xy)[(xy)^{-1}x]=x$.
\end{proof}
Theorem \ref{l3} yields an equivalent definition for a loop to be Moufang in terms of the behavior of one of its automorphisms.  In the same spirit, the next theorem gives a necessary and sufficient condition for a semidirect product of a loop and group to be Moufang.
\begin{theorem}
Let $N$ be a loop and $H$ a group.  Let $\phi:H\to \Sym(N)_{1}$ be a group homomorphism.  Then $G=N\rtimes_{\phi}H$ is a Moufang loop if and only if $\phi(H)\subseteq\Aut(N)$ and $\phi(h)$ satisfies \eqref{eq1} for all $h\in H$ and for all $x,y,z\in N$.
\label{t1}
\end{theorem}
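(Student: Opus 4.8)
The plan is to prove both implications by direct computation in $G$ using the semidirect product multiplication \eqref{semimult}, together with the characterization of the Moufang property via \eqref{eq1} established above. Throughout I would work with the Moufang identity in the form $(ab)(ca) = a(bc\cdot a)$, which by the equivalences listed above suffices to check for a loop, and $G$ is a loop whenever $\phi$ maps into $\Sym(N)_1$.

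For the forward direction, suppose $G = N\rtimes_\phi H$ is Moufang. Lemma \ref{l1} already gives $\phi(H)\subseteq\Aut(N)$, so it only remains to produce \eqref{eq1} for each $\phi(h)$. I would fix $h\in H$ and $x,y,z\in N$ and evaluate the identity $(ab)(ca) = a(bc\cdot a)$ in $G$ at $a = (x,h)$, $b = (y,1_H)$, $c = (z,1_H)$. Expanding both sides with \eqref{semimult} and $\phi(1_H) = \mathrm{id}$, the second coordinates agree automatically, and equating first coordinates gives $x\,\phi(h)(y)\cdot\phi(h)(zx) = x\cdot\phi(h)(yz\cdot x)$. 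Since $\phi(h)\in\Aut(N)$ one may distribute $\phi(h)$ over the products on each side, and then, because $\phi(h)$ is a bijection of $N$, rename $\phi(h)(y)$ and $\phi(h)(z)$ as arbitrary elements of $N$; this yields exactly \eqref{eq1} for $f = \phi(h)$, for all $x,y,z\in N$.

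For the reverse direction, suppose $\phi(H)\subseteq\Aut(N)$ and that every $\phi(h)$ satisfies \eqref{eq1}. As $\phi$ maps into $\Sym(N)_1$, $G$ is a loop, so it suffices to verify $(ab)(ca) = a(bc\cdot a)$ for all $a,b,c\in G$. Writing $a = (x,g)$, $b = (y,g_1)$, $c = (z,g_2)$ and expanding both sides by \eqref{semimult}, using that each of $\phi(g)$, $\phi(gg_1)$, $\phi(gg_1g_2)$ is an automorphism to push it through the products in $N$, the second coordinates of both sides equal $gg_1g_2g$, and the equality of first coordinates reduces to
\[
\bigl(x\,\phi(g)(y)\bigr)\bigl(\phi(gg_1)(z)\cdot\phi(gg_1g_2)(x)\bigr) = x\bigl(\phi(g)(y)\,\phi(gg_1)(z)\cdot\phi(gg_1g_2)(x)\bigr).
\]
This is precisely \eqref{eq1} applied to the automorphism $f = \phi(gg_1g_2)\in\phi(H)$ with the substitution $y\mapsto\phi(g)(y)$, $z\mapsto\phi(gg_1)(z)$, which holds by hypothesis; hence $G$ is Moufang.

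The argument is essentially bookkeeping, since the substantive content was already isolated in Lemma \ref{l1} and Theorem \ref{l3}. The only steps needing care are expanding the iterated products $(ab)(ca)$ and $a(bc\cdot a)$ in $G$ correctly and tracking how the automorphisms $\phi(g)$, $\phi(gg_1)$, $\phi(gg_1g_2)$ act on products in $N$, so that the identity in $G$ collapses to a single instance of \eqref{eq1}. If there is any subtlety, it is the observation that in the reverse direction one needs \eqref{eq1} for the automorphism indexed by the \emph{product} $gg_1g_2$ rather than for $\phi(g)$ alone, which is exactly why the hypothesis must be imposed for all $h\in H$.
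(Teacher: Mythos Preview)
Your proof is correct and follows essentially the same approach as the paper: direct expansion of the Moufang identity in $G$ using \eqref{semimult}, invoking Lemma~\ref{l1} for the forward direction, and reducing the first-coordinate equality to an instance of \eqref{eq1}. The only cosmetic difference is that in the forward direction the paper tests the Moufang identity at $a=(x,1_H)$, $b=(y,1_H)$, $c=(z,h)$, which yields \eqref{eq1} immediately without the extra substitution step, whereas your choice $a=(x,h)$, $b=(y,1_H)$, $c=(z,1_H)$ requires distributing $\phi(h)$ and renaming variables; both are equally valid.
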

\begin{proof}
Suppose $\phi(H)\subseteq\Aut(N)$ and $\phi(h)$ satisfies \eqref{eq1} for all $h \in H$ and for all $x,y,z\in N$, and let $(x,g),(y,h),(z,k)\in G$. Then, by hypothesis,
\begin{equation*}
 [x\phi(g)(y)]\cdot [\phi(gh)(z)\phi(ghk)(x)] \stackrel{\eqref{eq1}}{=} x[\phi(g)(y)\phi(gh)(z)\cdot \phi(ghk)(x)].
 \label{eq6}
\end{equation*}
Hence, 
\[
 ([x\phi(g)(y)]\cdot [\phi(gh)(z)\phi(ghk)(x)],ghkg)=(x[\phi(g)(y)\phi(gh)(z)\cdot \phi(ghk)(x)],ghkg),
\]
and thus
\[
[(x,g)(y,h)][(z,k)(x,g)]=(x,g)\cdot[(y,h)(z,k)\cdot(x,g)],
\]
which shows that $G$ is Moufang.

Conversely, if $G$ is Moufang, then by Lemma \ref{l1} we have $\phi(H)\subseteq\Aut(N)$.  Therefore 
\[
[(x,1_{H})(y,1_{H})][(z,h)(x,1_{H})]=(x,1_{H})[(y,1_{H})(z,h)\cdot (x,1_{H})],
\]
which is equivalent to
\[
(xy\cdot z\phi(h)(x),h)=(x(yz\cdot \phi(h)(x)),h). 
\]
Hence, $\phi(h)$ satisfies \eqref{eq1} for all $h\in H$, for all $x,y,z\in N$.
\end{proof}

Note that if $N=\texttt{MoufangLoop(12,1)}$, then it can be verified that there are no nontrivial automorphisms $f\in \Aut(N)$ which satisfy \eqref{eq1} for all $x,y,z\in N$.  Hence, for $H$ a group and $\phi:H\to \Aut(N)$ a nontrivial group homomorphism, then $G=N\rtimes_{\phi}H$ is not Moufang, which shows that the converse of Lemma \ref{l1} is false.  The next example illustrates that Theorem \ref{t1} can hold nontrivially.

\begin{example}
Let $N = \texttt{MoufangLoop(16,1)}$ and $H$ a group of order $2$.  Let $\phi:H\to \Aut(N)$ be a group homomorphism with $\phi(H)=\langle\texttt{(2,6)(3,7)(10,14)(11,15)}\rangle$.  It can be verified that $\phi(h)$ satisfies \eqref{eq1} for all $h\in H$ and for all $x,y,z\in N$.  Then $G=N\rtimes_{\phi}H$ is a Moufang loop of order $32$ ($\texttt{MoufangLoop(32,18)}$).
\end{example}

We end this section by showing that the class of IP loops is closed under semidirect products with groups acting as automorphisms.
\begin{proposition}
Let $N$ be a loop and $H$ a group.  Let $\phi:H\to\Aut(N)$ be a group homomorphism.  Then $G=N\rtimes_{\phi}H$ is an IP loop if an only if $N$ is an IP loop.
\end{proposition}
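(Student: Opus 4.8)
The plan is to treat the two implications separately; the forward one is immediate and the converse is a short direct computation.

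For the forward implication, note that since $\phi(1_H)=\mathrm{id}_N$ the set $\{(x,1_H)\mid x\in N\}$ is closed under the defining multiplication \eqref{semimult} and under both divisions, hence is a subloop of $G$ isomorphic to $N$. As every subloop of an IP loop is again an IP loop, $G$ being IP forces $N$ to be IP. (Equivalently, one simply restricts the two IP identities of $G$ to elements with trivial $H$-component.)

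For the converse, suppose $N$ is an IP loop. First I would record that every element of $G$ has a two-sided inverse, namely
\[
(x,g)^{-1}=\bigl(\phi(g^{-1})(x^{-1}),\,g^{-1}\bigr).
\]
This is checked directly: using that $\phi$ is a homomorphism with $\phi(1_H)=\mathrm{id}_N$ and that the automorphism $\phi(g)$ sends $x^{-1}$ to $\phi(g)(x)^{-1}$, both $(x,g)(x,g)^{-1}$ and $(x,g)^{-1}(x,g)$ collapse to $(1,1_H)$. Next I would verify the IP laws. Fix $(a,g),(b,h)\in G$. Computing $(b,h)(a,g)^{-1}$ gives $\bigl(b\cdot\phi(hg^{-1})(a^{-1}),\,hg^{-1}\bigr)$; multiplying this on the right by $(a,g)$ produces the $H$-component $hg^{-1}g=h$ and the $N$-component
\[
\bigl(b\cdot\phi(hg^{-1})(a^{-1})\bigr)\cdot\phi(hg^{-1})(a)=\bigl(b\,c^{-1}\bigr)c=b,
\]
where $c=\phi(hg^{-1})(a)$ and we use that $\phi(hg^{-1})$ preserves inverses together with the right IP law $(yc^{-1})c=y$ in $N$; hence $\bigl((b,h)(a,g)^{-1}\bigr)(a,g)=(b,h)$. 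The other IP law is checked the same way: $(a,g)^{-1}(b,h)=\bigl(\phi(g^{-1})(a^{-1}b),\,g^{-1}h\bigr)$, and left-multiplying by $(a,g)$ gives $H$-component $gg^{-1}h=h$ and $N$-component $a\cdot\phi(gg^{-1})(a^{-1}b)=a(a^{-1}b)=b$. Thus $G$ is an IP loop.

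The computations are entirely routine; the one thing to get right is the closed form for the inverse, after which the essential point is that in each step the relevant composite of $\phi$-values is $\phi$ of an identity of $H$, hence $\mathrm{id}_N$, so the IP laws of $N$ take over. I do not anticipate any real obstacle here: unlike the Moufang situation treated above in Theorem \ref{t1}, the full automorphism hypothesis makes the argument go through with no extra conditions on $\phi$.
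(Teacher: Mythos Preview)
Your proof is correct and follows essentially the same approach as the paper: the forward direction is dismissed as clear (you are slightly more explicit about the subloop $N\times\{1_H\}$), and the converse proceeds by writing down the closed form $(x,g)^{-1}=(\phi(g^{-1})(x^{-1}),g^{-1})$ and then directly verifying both IP identities via routine computation using that $\phi$-values are automorphisms. The only cosmetic difference is that you verify the IP laws in the forms $(yx^{-1})x=y$ and $x(x^{-1}y)=y$, whereas the paper uses the equivalent forms $x^{-1}(xy)=y$ and $(yx)x^{-1}=y$.
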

\begin{proof}
Clearly, if $G$ is IP, then $N$ is IP.
Conversely, if $N$ is IP, $x^{-1}$ and $g^{-1}$ exist for all $x\in N$ and $g\in G$.  It is easy to verify that $(x,g)^{-1}=(\phi(g^{-1})(x^{-1},g^{-1})$.  To show $G$ is IP, we compute
\begin{align*}
(x,g)^{-1}[(x,g)(y,h)]
&=(\phi(g^{-1})(x^{-1}),g^{-1})(x\cdot \phi(g)(y),gh)\\
&=(\phi(g^{-1})(x^{-1})\cdot\phi(g^{-1})[x\cdot \phi(g)(y)],g^{-1}gh)\\
&=((\phi(g^{-1})(x))^{-1}[\phi(g^{-1})(x)\cdot y],h)\\
&=(y,h).\\
[(y,h)(x,g)](x,g)^{-1}
&=(y\cdot\phi(h)(x),hg)(\phi(g^{-1})(x^{-1}),g^{-1})\\
&=([y\cdot \phi(h)(x)]\phi(hg)(\phi(g^{-1})(x^{-1})),hgg^{-1})\\
&=([y\cdot \phi(h)(x)](\phi(h)(x))^{-1},h)\\
&=(y,h).\qedhere
\end{align*}
\end{proof}
% % % % % % % % % % % % % % % % % % % %
% % % % % % % % % % % % % % % % % % % %
% % % % % % % % % % % % % % % % % % % %
\section{Extensions and inverse property loops}
\label{conc}
We now turn our attention to extensions of loops as considered in \cite{gagola13}.
\begin{theorem}[\cite{gagola13}]
Suppose $G = NH$ is a Moufang loop, where $N$ is a normal subloop of $G$ and $H=\langle u \rangle$ is a finite cyclic subgroup of $G$ whose order is coprime to 3.  Then for any $xu^{m},yu^{n}\in G$ where $x,y\in N$, we have
\begin{equation}
(xu^{m})(yu^{n})=f^{\frac{2m+n}{3}}(f^{\frac{-2m-n}{3}}(x)f^{\frac{m-n}{3}}(y))u^{m+n},
\label{mult1}
\end{equation}
where 
\[
f:N\to N \qquad x\to uxu^{-1}
\]
is a semiautomorphism of $N$.  Moreover, $G$ is a group if and only if $N$ is a group and $f$ is an automorphism of $N$.
\label{gag}
\end{theorem}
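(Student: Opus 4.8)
The plan is to argue entirely inside the ambient Moufang loop $G$, using its diassociativity and inverse property throughout. I would first record the basic properties of $f$. For each $x\in G$ the subloop $\langle u,x\rangle$ is a group, so $uxu^{-1}$ is unambiguous and $f^{k}(x)=u^{k}xu^{-k}$ for all $k\in\mathbb{Z}$; taking $k=|H|$ gives $f^{|H|}=\mathrm{id}$, so the order of $f$ divides $|H|$ and in particular is coprime to $3$. Hence cubing is a bijection on $\langle f\rangle$ and on $H$, so the symbols $f^{p/3}$ and $u^{p/3}$ in \eqref{mult1} are well defined as the unique cube roots, and conjugation by $u^{p/3}$ equals $f^{p/3}$. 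Since $G$ is Moufang, $\Inn(G)\le\Semi(G)$, and $f$ (being conjugation by $u$, i.e. the inner mapping $x\mapsto(ux)/u$) fixes $1$ and lies in $\Inn(G)$; as $N$ is normal it is $f$-invariant, so $f$ restricts to a semiautomorphism of $N$. Finally, computing inside the groups $\langle u,x\rangle$ yields the two ``straightening'' rules
\[
u^{a}\cdot(xu^{b})=f^{a}(x)\,u^{a+b},\qquad (xu^{a})\cdot u^{b}=x\,u^{a+b}\qquad(x\in N),
\]
valid, under the cube-root convention, for all the exponents that occur below.

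Next, set $s=\tfrac{2m+n}{3}$. Using the straightening rules I would decompose
\[
xu^{m}=u^{s}\cdot\bigl(f^{-s}(x)\,u^{\,m-s}\bigr),\qquad yu^{n}=\bigl(y\,u^{\,n-s}\bigr)\cdot u^{s},
\]
so that the Moufang identity $(pq)(rp)=p(qr\cdot p)$, applied with $p=u^{s}$, gives
\[
(xu^{m})(yu^{n})=u^{s}\Bigl(\bigl[(f^{-s}(x)\,u^{\,m-s})\,(y\,u^{\,n-s})\bigr]\,u^{s}\Bigr).
\]
The choice of $s$ is exactly what forces $n-s=-2(m-s)$, so the bracketed factor is an instance of the ``balanced'' identity
\[
(au^{i})(bu^{-2i})=\bigl(a\,f^{i}(b)\bigr)\,u^{-i}\qquad(a,b\in N),
\]
which I would obtain in one step from the Moufang identity $(zx\cdot y)x=z(x\cdot yx)$ applied with $z=au^{i}$, $x=u^{-i}$, $y=f^{i}(b)$: the straightening rules collapse the right-hand side to $(au^{i})(bu^{-2i})$ and the left-hand side to $\bigl(af^{i}(b)\bigr)u^{-i}$. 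Substituting this back (with $a=f^{-s}(x)$, $b=y$, $i=m-s$) and straightening twice more produces exactly the right-hand side of \eqref{mult1}, since $3s-m=m+n$, $\,m-s=\tfrac{m-n}{3}$, and $-s=\tfrac{-2m-n}{3}$.

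For the final assertion, if $G$ is a group then $N$ is a subgroup and $f$, being conjugation by $u$, restricts to an automorphism of $N$. Conversely, if $N$ is a group and $f\in\Aut(N)$, then every power of $f$ (hence each $f^{p/3}$) is an automorphism, so \eqref{mult1} collapses --- using $\tfrac{2m+n}{3}+\tfrac{m-n}{3}=m$ --- to $(xu^{m})(yu^{n})=\bigl(x\,f^{m}(y)\bigr)u^{m+n}$; checking $\bigl((xu^{i})(yu^{j})\bigr)(zu^{k})=(xu^{i})\bigl((yu^{j})(zu^{k})\bigr)$ directly from this formula reduces to associativity in $N$ together with each $f^{i}$ being an automorphism, so $G$ is associative. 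I expect the one genuinely delicate point to be locating the decomposition in the middle paragraph --- recognizing that $s=\tfrac{2m+n}{3}$ is precisely the exponent that turns a four-factor Moufang product into a single application of the balanced identity --- and then keeping the cube-root exponents of $u$ and of $f$ consistently bookkept; once that is in place the Moufang manipulations themselves are short.
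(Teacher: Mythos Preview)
The paper does not prove this theorem; it is quoted verbatim from \cite{gagola13} and used only to motivate the external extensions studied in \S\ref{conc}. There is therefore no in-paper argument to compare your proposal against.

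That said, your argument is sound and self-contained. The key observations --- that diassociativity makes every computation inside $\langle u,w\rangle$ associative (so the straightening rules hold for any $w\in G$), that $f=T_u\in\Inn(G)\le\Semi(G)$ restricts to a semiautomorphism of the normal subloop $N$, and that the choice $s=(2m+n)/3$ (well defined since $\gcd(|H|,3)=1$) forces $n-s=-2(m-s)$ so that a single application of $(pq)(rp)=p\bigl((qr)p\bigr)$ with $p=u^{s}$ reduces everything to the balanced product $(au^{i})(bu^{-2i})$ --- are all correct. Your derivation of the balanced identity via $(zx\cdot y)x=z(x\cdot yx)$ with $z=au^{i}$, $x=u^{-i}$, $y=f^{i}(b)$ checks out, and the final two straightenings land in the $2$-generated subloop $\langle f^{-s}(x)f^{i}(y),\,u\rangle$, so associativity there is legitimate. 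The closing paragraph on the group case is likewise correct: once $f$ is an automorphism, \eqref{mult1} collapses to the ordinary semidirect-product formula $(x,f^{m}(y),u^{m+n})$, and associativity follows from associativity of $N$.
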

Note that if $H=\langle u\rangle\simeq\mathbb{Z}_{2}$, then the multiplication in $G$ given by \eqref{mult1} is
\[
xu\cdot y=xf(y)\cdot u, \qquad x\cdot yu=f(f(x)f(y))\cdot u,\qquad xu\cdot yu =f(f(x)y) ,
\]
for all $x,y\in N$.  Hence, multiplication in such a loop $G$ is easily seen to be equivalent to the multiplication given in \cite{chein2}, Lemma $1$, pg $21$.  These types of extensions have been well-studied, see \cite{chein1,chein2,greer2}.
\begin{example}
A loop $Q$ is said to be a \emph{semiautomorphic, inverse property loop} (or just semiautomorphic IP loop) if
\begin{enumerate}
\item $Q$ is flexible; that is, $(xy)x=x(yx)$ for all $x,y\in Q$;
\item $Q$ is an IP loop; and
\item Every inner mapping is a semiautomorphism.
\end{enumerate}
It is known that Moufang loops are semiautomorphic IP loops and that semiautomorphic IP loops are diassociative loops \cite{KKP}.  Let $N$ be a semiautomorphic IP loop and $H=\langle u \rangle$ have order $2$, and let $\phi:H\to \Semi(N)$ be the group homomorphism such that $\phi(u)(x) = x^{-1}$ for all $x \in N$. Then $G=(N,H,\phi)$ with multiplication extended by \eqref{mult1} is a semiautomorphic IP loop (see \cite{greer2}), but not necessarily Moufang, since it is shown in \cite{chein2} that $G$ is Moufang if and only if $N$ is a group.
\label{e2}
\end{example}

Here, we consider the \emph{external} viewpoint.  That is, the external extension of a cyclic group $H=\langle u \rangle$ by a loop $N$ with $\phi:H\to \Semi(N)$ a group homomorphism. We extend the multiplication from $N$ to $G=(N,H,\phi)$ as 
\begin{equation}
(x,u^{m})(y,u^{n})=(f^{\frac{2m+n}{3}}(f^{\frac{-2m-n}{3}}(x)f^{\frac{m-n}{3}}(y)),u^{m+n}),
\label{mult2}
\end{equation}
where $f(x)=\phi(u)(x)$.  As the previous example shows, if $N$ is Moufang, $G$ need not be Moufang.  However, as the next theorem illustrates, the class of IP loops is closed under such extensions.
\begin{theorem}
Let $N$ be a loop, $H=\langle u \rangle$ a cyclic group, and let $\phi:H\to \Semi(N)$ be a group homomorphism.  If $|\phi(u)|$ is coprime to 3, define an extension $G = (N,H,\phi)$ with multiplication given by
\begin{equation*}
(x,u^{m})(y,u^{n})=(f^{\frac{2m+n}{3}}(f^{\frac{-2m-n}{3}}(x)f^{\frac{m-n}{3}}(y)),u^{m+n}),
\end{equation*}
where $f(x) =\phi(u)(x)$. Then $G$ is an IP loop if and only if $N$ is an IP loop.
\label{t2}
\end{theorem}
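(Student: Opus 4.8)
The plan is to prove both implications, with all the content in the reverse one. First I would record the standing facts that make the statement meaningful. Write $k=|\phi(u)|$; it is coprime to $3$, so $f:=\phi(u)\in\Semi(N)$ has order $k$, the power $f^{a}$ depends only on $a\bmod k$, and the exponents $\tfrac{2m+n}{3},\tfrac{m-n}{3},\dots$ appearing in \eqref{mult2} denote the unique residues they determine modulo $k$. Since $\phi$ is a homomorphism, $k\mid|u|$, so \eqref{mult2} is well defined on $H$, and unique solvability of the two division equations in $G$ reduces immediately to that in $N$; thus $G$ is a loop with neutral element $(1,u^{0})$. For the easy direction, $x\mapsto(x,u^{0})$ embeds $N$ as a subloop of $G$ because $(x,u^{0})(y,u^{0})=(xy,u^{0})$; and if $G$ is IP, then (checking that $(x,u^{0})^{-1}$ must have $H$-component $u^{0}$, so equals $(1/x,u^{0})$ with $1/x=x\backslash 1$ in $N$) restricting the two IP identities of $G$ to elements of height $u^{0}$ yields that $N$ is IP.

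So suppose $N$ is an IP loop. Since semiautomorphisms preserve inverses in loops with two-sided inverses (the Proposition of Section~\ref{back}), $f(x^{-1})=f(x)^{-1}$, and hence
\[
f^{a}(x^{-1})=(f^{a}(x))^{-1}\qquad(a\in\mathbb{Z},\ x\in N).
\]
The first step is to identify inverses in $G$: I would show $(x,u^{m})^{-1}=(\A,u^{-m})$ (recall $\A=f^{-m}(x^{-1})$) by substituting the heights $m$ and $-m$ into \eqref{mult2} — the $H$-component collapses to $u^{0}$, and the $N$-component collapses to $f^{m/3}\!\big(f^{-m/3}(x)\cdot f^{-m/3}(x^{-1})\big)=1$ by the displayed identity together with the IP law of $N$, and symmetrically on the other side. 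It then remains to verify
\[
\big((y,u^{n})(x,u^{m})^{-1}\big)(x,u^{m})=(y,u^{n}),\qquad (x,u^{m})\big((x,u^{m})^{-1}(y,u^{n})\big)=(y,u^{n})
\]
for all $x,y\in N$ and all $m,n$, by expanding each side with \eqref{mult2}.

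Carrying this out, the $H$-components return to $u^{n}$ automatically, and after absorbing the factor $f^{-m}$ from the inverse into the exponents, the $N$-component of the first identity takes the shape $f^{s}\big((f^{-s}(y)\cdot f^{t}(x^{-1}))\cdot f^{t}(x)\big)$ with $s=\tfrac{2n-m}{3}$, $t=\tfrac{n-2m}{3}$ — the matching of the two inner exponents $t$ being precisely what the bookkeeping produces — which by the displayed identity and $(wc^{-1})c=w$ in $N$ (with $c=f^{t}(x)$) equals $f^{s}(f^{-s}(y))=y$; the second identity reduces symmetrically, with $r=\tfrac{m+n}{3}$ and $c=f^{-r}(x)$, to $c(c^{-1}w)=w$ in $N$. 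The one real obstacle I expect is this exponent bookkeeping: one must track the fractional powers of $f$ through three nested applications of \eqref{mult2} and confirm they telescope so that what survives is genuinely an instance of the inverse property of $N$. No loop-theoretic input beyond the displayed consequence of the Proposition and the IP law of $N$ should be needed.
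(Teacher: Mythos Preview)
Your proposal is correct and follows essentially the same route as the paper: identify $(x,u^{m})^{-1}=(f^{-m}(x^{-1}),u^{-m})$ using $f^{a}(x^{-1})=(f^{a}(x))^{-1}$ from the Proposition, then expand both IP identities via \eqref{mult2} and check that the exponents telescope to an instance of the IP law in $N$. The paper simply writes out every line of the two computations explicitly, whereas you summarize the exponent bookkeeping by naming the surviving parameters $s,t,r$; the content is identical.
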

\begin{proof}
Clearly, $N$ is an IP loop if $G$ is an IP loop. Now, suppose $N$ is an IP loop. Note that since the order of $f$ in $\Semi(N)$ is coprime to $3$, $f$ must have a unique cubed root, denoted by $f^{\frac{1}{3}}$.  It is clear that $G$ is a loop with identity $(1,u^{0})$.  For $(x,u^{m}) \in G$, consider the element $(f^{-m}(x^{-1}),u^{-m})$.  Note that this is well-defined, since $N$ is an IP loop and $H$ is a group.  Then,
\begin{align*}
(x,u^{m})(f^{-m}(x^{-1}),u^{-m})
&=(f^{\frac{2m-m}{3}}(f^{\frac{-2m+m}{3}}(x)f^{\frac{m+m}{3}}(f^{-m}(x^{-1}))),u^{0})\\
&=(f^{\frac{m}{3}}(f^{\frac{-m}{3}}(x)f^{\frac{-m}{3}}(x^{-1})),u^{0})\\
&=(f^{\frac{m}{3}}(f^{\frac{-m}{3}}(x)(f^{\frac{-m}{3}}(x))^{-1}),u^{0})\\
&=(f^{\frac{m}{3}}(1),u^{0})\\
&=(1,u^{0})\\
&=(f^{\frac{-m}{3}}(1),u^{0})\\
&=(f^{\frac{-m}{3}}((f^{\frac{-2m}{3}}(x))^{-1}f^{\frac{-2m}{3}}(x)),u^{0})\\
&=(f^{\frac{-m}{3}}(f^{\frac{-2m}{3}}(x^{-1})f^{\frac{-2m}{3}}(x)),u^{0})\\
&=(f^{\frac{-2m+m}{3}}(f^{\frac{2m-m}{3}}(f^{-m}(x^{-1})f^{\frac{-m-m}{3}}(x))),u^{0})\\
&=(f^{-m}(x^{-1}),u^{-m})(x,u^{m}).
\end{align*}
Hence, $G$ has two sided inverses for all $(x,u^{m})\in G$, namely $(x,u^{m})^{-1}=(f^{-m}(x^{-1}),u^{-m})$.  To show that $G$ is an IP loop, we compute
\begin{align*}
(x,u^{m})[(f^{-m}(x^{-1}),&u^{-m})(y,u^{n})]\\ 
&=(x,u^{m})(f^{\frac{-2m+n}{3}}(f^{\frac{2m-n}{3}}(\A)f^{\frac{-m-n}{3}}(y)),u^{-m+n})\\
&=(f^{\frac{m+n}{3}}(f^{\frac{-m-n}{3}}(x)   [f^{\frac{2m-n}{3}}(f^{\frac{-2m+n}{3}}(f^{\frac{2m-n}{3}}(\A)f^{\frac{-m-n}{3}}(y)))]),u^{n})\\
&=(f^{\frac{m+n}{3}}(f^{\frac{-m-n}{3}}(x)   [(f^{\frac{2m-n}{3}}(\A)f^{\frac{-m-n}{3}}(y))]),u^{n})\\
&=(f^{\frac{m+n}{3}}(f^{\frac{-m-n}{3}}(x)[f^{\frac{-m-n}{3}}(x^{-1})f^{\frac{-m-n}{3}}(y)]),u^{n})\\
&=(f^{\frac{m+n}{3}}(f^{\frac{-m-n}{3}}(x)[(f^{\frac{-m-n}{3}}(x))^{-1}f^{\frac{-m-n}{3}}(y)]),u^{n})\\
&=(f^{\frac{m+n}{3}}(f^{\frac{-m-n}{3}}(y)),u^{n})\\
&=(y,u^{n}).\\
&\\
[(y,u^{n})(\A,&u^{-m})](x,u^{m})\\
&=(f^{\frac{2n-m}{3}}[f^{\frac{-2n+m}{3}}(y)f^{\frac{n+m}{3}}(\A)],u^{n-m})(x,u^{m})\\
&=(f^{\frac{2n-m}{3}}[f^{\frac{-2n+m}{3}}(f^{\frac{2n-m}{3}}[f^{\frac{-2n+m}{3}}(y)f^{\frac{n+m}{3}}(\A)])f^{\frac{n-2m}{3}}(x)],u^{n})\\
&=(f^{\frac{2n-m}{3}}(([f^{\frac{-2n+m}{3}}(y)f^{\frac{n+m}{3}}(\A)])f^{\frac{n-2m}{3}}(x)),u^{n})\\
&=(f^{\frac{2n-m}{3}}([f^{\frac{-2n+m}{3}}(y)f^{\frac{n-2m}{3}}(x^{-1})]f^{\frac{n-2m}{3}}(x)),u^{n})\\
&=(f^{\frac{2n-m}{3}}([f^{\frac{-2n+m}{3}}(y)(f^{\frac{n-2m}{3}}(x))^{-1}]f^{\frac{n-2m}{3}}(x)),u^{n})\\
&=(f^{\frac{2n-m}{3}}(f^{\frac{-2n+m}{3}}(y),u^{n}))\\
&=(y,u^{n}). \qedhere
\end{align*}
\end{proof}
The next two examples illustrate that if certain stronger conditions are imposed on $N$, then an extension as in Theorem \ref{t2} need not inherit a similar structure.
\begin{example}
Let $N$ be the quaternion group of order $8$ (GAP \texttt{SmallGroup(8,4)}) and $H$ a group of order $2$.  Let $\phi:H\to \Semi(N)$ a group homomorphism with $\phi(H)=\langle\texttt{(2,5)(6,8)}\rangle$.  Then the extension $G=(N,H,\phi)$ as in Theorem \ref{t2} is a power-associative (the subloop generated by $x$ is associative for all $x\in G$), IP loop.  It can be verified that $G$ is neither flexible nor diassociative, although it is clear that $N$ has both properties.
\end{example}

\emph{Steiner loops}, which arise from Steiner triple systems in combinatorics, are loops satisfying the identities $xy=yx$, $x(yx)=y$.  In particular, Steiner loops are semiautomorphic IP loops.
\begin{example}
Let $N$ be a Steiner loop of order $8$ (\texttt{SteinerLoop(8,1)}) and $H=\langle u \rangle $ a group of order $4$.  Let $\phi:H\to \Semi(N)$ be a group homomorphism with $\phi(H)=\langle\texttt{(2,4,8,3)}\rangle$.  Then the extension $G=(N,H,\phi)$ as in Theorem \ref{t2} is an IP loop of order $32$ that is neither power-associative nor flexible, although it is clear that $N$ has both properties.
\end{example}
\begin{corollary}
Let $N$ be a Moufang loop, $H=\langle u \rangle$ a cyclic group, and let $\phi:H\to \Semi(N)$ be a group homomorphism.  If $|\phi(u)|$ is coprime to 3, define an extension $G = (N,H,\phi)$ with multiplication given by
\begin{equation*}
(x,u^{m})(y,u^{n})=(f^{\frac{2m+n}{3}}(f^{\frac{-2m-n}{3}}(x)f^{\frac{m-n}{3}}(y)),u^{m+n}),
\end{equation*}
where $f(x) =\phi(u)(x)$. Then $G$ is an IP loop.
\label{cor2}
\end{corollary}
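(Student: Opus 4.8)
The plan is to obtain this statement as an immediate specialization of Theorem \ref{t2}. The multiplication formula \eqref{mult2} defining $G=(N,H,\phi)$ is literally the same one appearing in Theorem \ref{t2}, and the standing hypotheses --- $H=\langle u\rangle$ cyclic, $\phi\colon H\to\Semi(N)$ a group homomorphism, and $|\phi(u)|$ coprime to $3$ --- are exactly those required there. So the only point to check is that the extra assumption that $N$ is Moufang is enough to place us in the ``$N$ is an IP loop'' hypothesis of that theorem.

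For this I would invoke the standard fact, already recorded in \S2, that Moufang loops are diassociative: the subloop $\langle x,y\rangle$ generated by any two elements is a group. In particular every $x\in N$ has a two-sided inverse $x^{-1}$, and computing inside the group $\langle x,y\rangle$ gives $(yx^{-1})x=y=x(x^{-1}y)$ for all $x,y\in N$; hence $N$ is an IP loop. (Equivalently, one may simply cite that Moufang loops have the inverse property.) Feeding this into Theorem \ref{t2} yields at once that $G=(N,H,\phi)$ is an IP loop, which is the assertion.

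I do not anticipate any real obstacle: all the substantive work --- exhibiting the two-sided inverse $(x,u^{m})^{-1}=(f^{-m}(x^{-1}),u^{-m})$ and verifying the two inverse-property identities under the twisted product \eqref{mult2} --- is already carried out in the proof of Theorem \ref{t2}, and that proof uses only that $N$ is IP and that $f=\phi(u)$ is a semiautomorphism of order coprime to $3$. The reason for isolating the corollary is mostly to contrast it with the preceding examples: the stronger properties enjoyed by a Moufang $N$ (flexibility, diassociativity, power-associativity, and Moufangness itself, cf. Example \ref{e2}) are in general \emph{not} inherited by $G$, so ``IP'' is essentially the sharpest conclusion available.
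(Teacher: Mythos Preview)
Your proposal is correct and matches the paper's own proof essentially verbatim: the paper simply observes that Moufang loops are IP loops and then invokes Theorem \ref{t2}. Your additional remarks about diassociativity and the contrast with the earlier examples are accurate elaborations but not needed for the argument.
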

\begin{proof}
Since Moufang loops are IP loops, the desired result follows from Theorem \ref{t2}.
\end{proof}

\begin{acknowledgment}
Some investigations in this paper were assisted by the automated deduction tool, \textsc{Prover9}, and the finite model builder, \textsc{Mace4}, both developed by McCune \cite{mccune09}.  Similarly, all presented examples were found and verified using the GAP system \cite{GAP} together with the LOOPS package \cite{GAPNV}.  
\end{acknowledgment}

\bibliographystyle{amsplain}

\end{document}